\def\@strippedMR{} \def\@scanforMR#1#2#3\endscan{%
  \ifx#1M\ifx#2R\def\@strippedMR{#3}%
  \else\def\@strippedMR{#1#2#3}%
  \fi\fi} \renewcommand\MR[1]{\relax \ifhmode\unskip\spacefactor3000
  \space\fi \@scanforMR#1\endscan
  MR\MRhref{\@strippedMR}{\@strippedMR}} \makeatother
\newcommand{\R}{\mathbb{R}} \newcommand{\Z}{\mathbb{Z}}
 \newcommand{\C}{\mathbb{C}}
\theoremstyle{plain} \newtheorem{thm}{Theorem}[section]
\newtheorem{lem}[thm]{Lemma} 
\newtheorem{pro}[thm]{Proposition}
\theoremstyle{definition} \newtheorem{dfn}[thm]{Definition}
\theoremstyle{remark} \newtheorem{rem}{Remark}
 \newcommand{\lb}{\langle}
\newcommand{\rb}{\rangle} \newcommand{\ls}{\lesssim}
\newcommand{\gs}{\gtrsim}
\begin{document}

\title[The Zakharov-Kuznetsov equation]%
{The Fourier restriction norm method for the Zakharov-Kuznetsov equation}

\author[A.~Gr\"unrock]{Axel~Gr\"unrock}

\address{Heinrich-Heine-Universit\"at D\"usseldorf, Mathematisches Institut,
Universit\"atsstra{\ss}e 1, 40225 D\"usseldorf, Germany}
\email{gruenroc@math.uni-duesseldorf.de}

\author[S.~Herr]{Sebastian~Herr}

\address{Universit\"at Bielefeld, Fakult\"at f\"ur Mathematik,
  Postfach 10 01 31, 33501 Bielefeld, Germany}
\email{herr@math.uni-bielefeld.de}

\subjclass[2010]{35Q53}

\begin{abstract}
The Cauchy problem for the Zakharov-Kuznetsov equation is shown to be locally
well-posed in $H^s(\R^2)$ for all $s>\frac12$ by using the Fourier restriction norm method and bilinear refinements of Strichartz type inequalities.
\end{abstract}
\keywords{Zakharov-Kuznetsov equation -- well-posedness -- Fourier restriction norm method}
\maketitle
\section{Introduction and main results}\label{sect:intro_main}
\noindent
We consider the initial value problem
\begin{equation}\label{eq:zk}
  \begin{split}
    \partial_t u +\partial_x^3u+\partial_x\partial_{yy}u&=\partial_x(u^2)\quad \text{in }(-T,T)\times \R^d,\\
    u(0,\cdot)&=\phi \in H^s(\R^d)
  \end{split}
\end{equation}
for the Zakharov-Kuznetsov equation, which is a higher dimensional
generalization of the Korteweg-de Vries equation. In three dimensions,
this equation has been derived by Zakharov and Kuznetsov
\cite[equation (6)]{ZK74} to describe unidirectional wave propagation
in a magnetized plasma. A derivation of the two-dimensional equation
considered here from the basic hydrodynamic equations was performed by
Laedke and Spatschek in \cite[Appendix B]{LS82}. A rigorous
justification of equation \eqref{eq:zk} from the Euler-Poisson system
for a uniformly magnetized plasma, valid in both considered space
dimensions, was given very recently by Lannes, Linares, and Saut in
\cite{LLSp}. Various aspects of the Zakharov-Kuznetsov equation and
its generalizations have attracted much attention in recent
years. Without attempting to be complete we refer to the papers
\cite{Sh90,F95,BL03,Pa04,F08,LP09,LPS10,ST10,LP11,RV12} and references
therein. In this paper we will focus on the case $d=2$.

\quad

Regular solutions preserve the $L^2(\R^d)$-norm.
Furthermore, there is an underlying Hamiltonian structure and
conservation of energy, cp. \cite{LS09} and references therein.

\quad

The three-dimensional version of the Cauchy problem \eqref{eq:zk} was
shown to be locally well-posed in $H^s(\R^3)$ for $s > \frac{9}{8} $
by Linares and Saut in \cite{LS09}, where they used the refined energy
method of Koch an Tzvetkov, see \cite{KT03}. This result has been
pushed down to $s>1$ recently by Ribaud and Vento \cite{RV11}, who
proved an essentially sharp maximal function estimate for the
linearized equation and combined this with the local smoothing effect.

\quad

Concerning the Cauchy problem on $\R^2$, global well-posedness in the
Sobolev space $H^1(\R^2)$ has been shown by Faminski{\u\i} in
\cite{F95}. Following the argument developed by Kenig, Ponce, and Vega
in \cite{KPV93} he proves the local smoothing effect, a maximal
function estimate as well as a Strichartz type inequality for the
linear equation to obtain local well-posedness by the contraction
mapping principle. The global result is then a consequence of the
conservation of energy. Linares and Pastor observed in
\cite[Theorem 1.6]{LP09} that Faminski{\u\i}'s proof can be optimized to
obtain local well-posedness in the larger data spaces $H^s(\R^2)$ with
$1 > s > \frac34$. To our knowledge this is the most advanced result
concerning the local problem up to date. We remark that
all of the above mentioned results rely on linear estimates to handle
nonlinear interactions of waves.
The purpose of this paper is  improve the well-posedness theory by using genuinely bilinear estimates.

\begin{thm}\label{thm:main}
  Let $s>\frac12$. The initial value problem \eqref{eq:zk} is locally
  well-posed in $H^s(\R^2)$.
\end{thm}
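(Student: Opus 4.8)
The plan is to run the Fourier restriction norm method of Bourgain in spaces $X^{s,b}$ adapted to the Zakharov--Kuznetsov dispersion. Writing the spatial frequency as $(\xi,\eta)$, the phase is $\phi(\xi,\eta)=\xi^3+\xi\eta^2=\xi(\xi^2+\eta^2)$, and I would define $X^{s,b}$ through
\[
  \|u\|_{X^{s,b}}=\big\|\lb(\xi,\eta)\rb^s\,\lb\tau-\phi(\xi,\eta)\rb^b\,\widehat{u}(\tau,\xi,\eta)\big\|_{L^2_{\tau\xi\eta}}.
\]
Let $W(t)$ be the unitary group solving the linear equation. Recasting \eqref{eq:zk} in Duhamel form
\[
  u(t)=W(t)\phi+\int_0^t W(t-t')\,\partial_x(u^2)(t')\,dt',
\]
and inserting a smooth time cut-off $\psi(t/T)$, the problem reduces to a contraction in a ball of the time-localized space $X^{s,b}_T$, for some $b>\tfrac12$ chosen close to $\tfrac12$.

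First I would record the two standard linear estimates, which are insensitive to the precise dispersion relation: the homogeneous bound $\|\psi(t)\,W(t)\phi\|_{X^{s,b}}\ls\|\phi\|_{H^s}$, and the inhomogeneous bound $\|\psi(t/T)\int_0^t W(t-t')F\,dt'\|_{X^{s,b}}\ls T^{\theta}\|F\|_{X^{s,b-1}}$ for some $\theta>0$ when $b>\tfrac12$ is close to $\tfrac12$. The factor $T^\theta$ is what ultimately makes the map a contraction on a small time interval.

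The heart of the matter is the genuinely bilinear estimate
\[
  \|\partial_x(uv)\|_{X^{s,b-1}}\ls\|u\|_{X^{s,b}}\,\|v\|_{X^{s,b}}.
\]
I would dyadically decompose $u,v$ in the spatial frequencies $N_1,N_2$ (with output frequency $N$) and in the modulations $\lb\tau_j-\phi_j\rb$, and then, by duality, reduce each piece to an $L^2_{t,x,y}$ estimate for a product of two free Zakharov--Kuznetsov waves. The available smoothing is measured by the resonance function
\[
  H=3\xi_1\xi_2(\xi_1+\xi_2)+\xi_1\eta_2^2+\xi_2\eta_1^2+2(\xi_1+\xi_2)\eta_1\eta_2,
\]
which equals $\phi(\xi,\eta)-\phi(\xi_1,\eta_1)-\phi(\xi_2,\eta_2)$ and governs how far the interaction sits from the characteristic surface. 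Where the two input frequencies are transversal, a co-area/change-of-variables argument produces a bilinear refinement of the Strichartz inequality that recovers close to a full $x$-derivative; crucially, the transverse $\eta$-dispersion, absent for KdV, supplies the extra smoothing that should push the threshold down to $s=\tfrac12$.

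The main obstacle will be the full $x$-derivative in the nonlinearity at the low regularity $s>\tfrac12$, where the Sobolev weights leave almost no slack. Two regimes are delicate: the high$\times$low$\to$high interactions, in which the output frequency $N\sim N_1$ is large while $N_2$ is small so that the derivative cannot be distributed, and the near-resonant set where $H$ degenerates, including the directions $\eta_j=\pm\sqrt3\,\xi_j$ on which the Hessian determinant $12\xi^2-4\eta^2$ of $\phi$ vanishes and the configurations with $\xi_1\xi_2(\xi_1+\xi_2)\approx0$. In those regions I expect to split according to the sizes of the modulations and balance the modulation gain against the bilinear Strichartz gain, using $|H|\ls\lb\tau-\phi\rb+\lb\tau_1-\phi_1\rb+\lb\tau_2-\phi_2\rb$ to convert frequency separation into modulation smoothing. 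Once the bilinear estimate is established for some $b>\tfrac12$, the contraction mapping principle yields a unique solution in $X^{s,b}_T$, and the bilinearity of the estimate gives Lipschitz dependence on the initial data, hence local well-posedness in $H^s(\R^2)$ for $s>\tfrac12$.
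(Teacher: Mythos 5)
Your framework is the right one, and in fact it is the same general strategy as the paper's: $X^{s,b}$ spaces adapted to the dispersion, the two standard linear estimates, a key bilinear estimate of the form $\|\partial_x(uv)\|_{X^{s,b'}}\ls\|u\|_{X^{s,b}}\|v\|_{X^{s,b}}$, and a contraction in $X^{s,b}_T$. But there is a genuine gap: the bilinear estimate, which is the entire mathematical content of the theorem at $s>\frac12$, is never proved. Your outline says that in the transversal regime a ``co-area/change-of-variables argument produces a bilinear refinement \dots that recovers close to a full $x$-derivative,'' and that in the delicate regimes you ``expect to split according to the sizes of the modulations.'' Those sentences are precisely where the work lies, and they are left as intentions. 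The paper needs a specific family of sharp bilinear estimates of Kenig--Ponce--Vega type (Proposition \ref{pro:bil} and its consequences in Remark \ref{rem:bil}), the Strichartz family \eqref{eq:str1}--\eqref{eq:lpq-str}, and the resonance identity \eqref{eq:res}, combined in a four-region case analysis (Theorem \ref{thm:key-est}) in which the exponents only just close at $s>\frac12$, $b'\le-\frac13$; none of this bookkeeping is generic, and naming the dangerous regions (high$\times$low, near-resonant set, the degenerate directions $\eta=\pm\sqrt3\,\xi$) is not the same as handling them.

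Moreover, by staying in the original variables you make the crucial bilinear smoothing substantially harder than you acknowledge. The paper first performs a linear change of variables (a rotation, Subsection \ref{subsect:trafo}) turning the dispersion into the symmetric $\partial_x^3+\partial_y^3$. After this, the resonance function factors cleanly, $\sigma_0-\sigma_1-\sigma_2=3(\xi\xi_1\xi_2+\eta\eta_1\eta_2)$, and in the delta-function computation behind Proposition \ref{pro:bil} the Jacobian is exactly $3|\xi||\xi_1-\xi_2|$, which is precisely what the operator $I_x^{\frac12}I_{x,-}^{\frac12}$ compensates. In your untransformed setting the corresponding Jacobian is $|3(\xi_1^2-\xi_2^2)+\eta_1^2-\eta_2^2|$, which degenerates on the whole set $3\xi_1^2+\eta_1^2=3\xi_2^2+\eta_2^2$ --- far more than the coincident-frequency configurations --- and your resonance function $H=3\xi_1\xi_2(\xi_1+\xi_2)+\xi_1\eta_2^2+\xi_2\eta_1^2+2(\xi_1+\xi_2)\eta_1\eta_2$ (which you computed correctly) does not factor, so ``transversality'' fails in many configurations your outline does not enumerate; this is exactly the obstruction the symmetrization removes. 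A smaller but real inconsistency: the inhomogeneous estimate as you state it, with the nonlinearity measured in $X^{s,b-1}$ and a gain $T^\theta$, $\theta>0$, is not available --- Lemma \ref{lem:linear} gives the factor $T^{1-b+b'}$, which is $T^0$ when $b'=b-1$. One must prove the bilinear estimate with $b'$ strictly larger than $b-1$ (the paper uses $b'\le-\frac13$ together with $b$ close to $\frac12$) in order to extract a positive power of $T$ for the contraction.
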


The scale invariant Sobolev regularity is $s_c = - 1$. We
expect that the regularity threshold can be improved further, but we
do not pursue this here.

\quad

The paper is organized as follows: We first
perform a linear transformation in the space variables $x$ and $y$,
see Subsection \ref{subsect:trafo} below. Then,
in Subsection \ref{subsect:fs}, we introduce the $X^{s,b}$- and restriction
spaces adapted to the linear part of the transformed equation and
recall the corresponding Strichartz-type estimates. Subsection
\ref{subsect:bil} is devoted to the proof of several bilinear space
time estimates for free solutions, which play a major role in our
analysis. In Section
\ref{sect:key} we prove our main bilinear estimate in Theorem
\ref{thm:key-est}.
\section{Preliminaries}\label{sect:pre}
\noindent
We start by fixing notation. Throughout this paper we denote the first spatial variable by $x$, its
dual Fourier variable by $\xi$, and the second spatial variable by $y$, and its dual Fourier variable
by $\eta$. As usual, $\tau$ is the dual variable of the time $t$. For $s \in \R$ $J^s$ and $I^s$ denote
the Bessel- and Riesz-potential operators of order $-s$ with respect to both spatial variables. The
corresponding one-dimensional operators will be called $J^s_x$ and $I^s_x$ respectively $J^s_y$ and $I^s_y$.
Moreover, we use the operator $\Lambda ^b := \mathcal{F}^{-1}\lb\tau -
\xi^3 -\eta^3\rb\mathcal{F}$, where for $a \in \R$ we set
$\lb a \rb := (1+a^2)^{\frac12}$. Projections onto dyadic intervals in Fourier space receive additional
subscripts, e.g. for $k \in \Z$ we define $P_{x,k}=
\mathcal{F}^{-1}\chi_{\{|\xi| \le 2^k\}} \mathcal{F}$, where $\chi$
denotes the (sharp) characteristic function.
$P_{x,\Delta k}=P_{x,k+1}-P_{x,k}$, $P_{x, \ge 1} = Id - P_{x,0}$, and similarly for the $y$- and $\eta$-variables.

\subsection{A linear transformation}\label{subsect:trafo}
We perform a linear change of variables (essentially a rotation) in order to symmetrize the
equation. A systematic study of such
transformations in connection with dispersive estimates for cubic
phase functions of two variables can be found in \cite{BKS03}. Let $x'=\mu x+\lambda y$ and $y'=\mu x -\lambda y$ and
$u'(x',y')=u(x,y)$. Then,
\begin{align*}
\partial_x u(x,y)=&\mu (\partial_{x'}+\partial_{y'}) v(x',y')\\
\partial_y u(x,y)=&\lambda (\partial_{x'}-\partial_{y'}) v(x',y')
\end{align*}
which implies
\begin{align*}
&(\partial_x^3+\partial_x\partial_y^2 )u(x,y)\\=&\mu^3
(\partial_{x'}+\partial_{y'})^3 v(x',y')+\mu \lambda^2
(\partial_{x'}+\partial_{y'})(\partial_{x'}-\partial_{y'})^2
v(x',y')\\
=&(\mu^3+\mu \lambda^2)(\partial_{x'}^3+\partial_{y'}^3) v(x',y')+(3\mu^3-\mu \lambda^2)(\partial_{x'}^2\partial_{y'}+\partial_{x'}\partial_{y'}^2) v(x',y').
\end{align*}
Choosing $\mu=4^{-\frac13}$ and $\lambda=\sqrt{3}4^{-\frac13}$ reduces
the above to
\begin{equation*}
(\partial_x^3+\partial_x\partial_y^2 )u(x,y)=(\partial_{x'}^3+\partial_{y'}^3) v(x',y')
\end{equation*}
which implies that we may consider the initial value problem
\begin{equation}\label{eq:zk-tr}
  \begin{split}
    \partial_t v +(\partial_x^3+\partial_y^3)v&=4^{-\frac13} (\partial_x+\partial_y)(v^2)\quad \text{in }(-T,T)\times \R^2,\\
    v(0,\cdot)&=\phi \in H^s(\R^2)
  \end{split}
\end{equation}
instead of \eqref{eq:zk} without changing the well-posedness theory.
We define the associated unitary group
$U(t):=e^{-t(\partial_x^3+\partial_y^3)}$.
\subsection{Function spaces and linear estimates}\label{subsect:fs}
In analogy with the KdV theory in \cite{B93,KPV96} we use
Bourgain's $X^{s,b}$ spaces. We refer the reader to the expositon in \cite[Section
2]{GTV97} for more details.
\begin{dfn}\label{dfn:spaces}
Let $s,b\in \R$. The space $X^{s,b}$ is defined as the space of all tempered distributions $u$ on
$\R\times \R^2$ such that $\widehat{u} \in L^2_{loc}(\R\times \R^2)$ and
\begin{equation}\label{eq:x-norm}
\|u\|_{s,b}:=\|\lb \tau -\xi^3-\eta^3\rb^b \lb \xi\rb^s
\widehat{u}(\tau,\xi,\eta) \|_{L^2_{\tau\xi\eta}}<+\infty.
\end{equation}
Furthermore, for $T>0$ we define the restriction space $X_T^{s,b}$ as
the space of all $u|_{(0,T)\times \R^2}$ for $u \in X^{s,b}$, with
norm
\begin{equation}\label{eq:x-norm-restr}
\|u\|_{s,b;T}:=\inf \{\|v\|_{s,b} : v \in X^{s,b}, v|_{(0,T)\times\R^2} =u\}.
\end{equation}
\end{dfn}
Finally, we define the set $X^{s,b}_{loc}$ of all $u$ satisfying $u|_{(0,T)\times \R^2}\in X_T^{s,b}$ for all $T>0$.

Let $\psi\in C_0^\infty(-2,2)$ be even, $0\leq \psi\leq 1$ and $\psi(t)=1$
for $|t| \leq 1$, and define $\psi_T(t):=\psi(t/T)$ for $T>0.$

The following result and its proof can be found in \cite[Lemma 2.1]{GTV97}.
\begin{lem}\label{lem:linear}
Let $s,b\in \R$. Then,
\begin{equation}\label{eq:linear-hom}
\|\psi U\phi\|_{s,b}\ls \|\phi\|_{H^s}.
\end{equation}
Also, for $-\frac12<b'\leq 0\leq b\leq b'+1$, $0<T\leq 1$,
\begin{equation}\label{eq:linear-inhom}
\|\psi_T \int_0^t U(t-s) f(s) ds\|_{s,b}\ls {T^{1-b+b'}}\|f\|_{s,b'}.
\end{equation}
\end{lem}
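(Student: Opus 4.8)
The plan is to treat the two estimates separately, deriving the homogeneous bound \eqref{eq:linear-hom} by a direct computation on the Fourier side, and reducing the inhomogeneous bound \eqref{eq:linear-inhom}, via conjugation with the group and a scaling argument, to a one-dimensional estimate in the time variable alone. For \eqref{eq:linear-hom} I would compute the full space-time Fourier transform of $\psi U\phi$. Since $\mathcal F_{x,y}(U(t)\phi)(\xi,\eta) = e^{it(\xi^3+\eta^3)}\widehat\phi(\xi,\eta)$, transforming in $t$ gives $\mathcal F(\psi U\phi)(\tau,\xi,\eta) = \widehat\psi(\tau-\xi^3-\eta^3)\widehat\phi(\xi,\eta)$. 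Inserting this into \eqref{eq:x-norm} and substituting $\sigma = \tau-\xi^3-\eta^3$ for fixed $(\xi,\eta)$ decouples the integral as $\|\lb\sigma\rb^b\widehat\psi(\sigma)\|_{L^2_\sigma}^2\,\|\phi\|_{H^s}^2$; the first factor is finite because $\psi\in C_0^\infty$ forces $\widehat\psi$ to be Schwartz, and \eqref{eq:linear-hom} follows.

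For \eqref{eq:linear-inhom} the first step is to remove the dispersive phase. Writing $g(t) := U(-t)f(t)$ one has $\int_0^t U(t-s)f(s)\,ds = U(t)\int_0^t g(s)\,ds$, so that applying $U(-t)$ pointwise in $t$ sends $\psi_T\int_0^t U(t-s)f\,ds$ to $\psi_T\int_0^t g$. A change of variables identical to the one above shows that $U(-\cdot)$ is an isometry from $X^{s,b}$ onto the phase-free space with norm $\|h\|_{Y^{s,b}} := \|\lb\tau\rb^b\lb\xi\rb^s\widehat h\|_{L^2}$, under which $\|f\|_{s,b'} = \|g\|_{Y^{s,b'}}$. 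Thus it suffices to prove $\|\psi_T\int_0^\cdot g\|_{Y^{s,b}}\ls T^{1-b+b'}\|g\|_{Y^{s,b'}}$. Since the operator $g\mapsto\psi_T\int_0^t g$ acts in $t$ only and the weight $\lb\xi\rb^s$ factors out, Fubini reduces this to the scalar one-dimensional estimate $\|\psi_T\int_0^\cdot g\|_{H^b_t}\ls T^{1-b+b'}\|g\|_{H^{b'}_t}$.

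I would then split this last estimate into a scaling step and a base case. Rescaling $t = T\theta$ turns $\psi_T\int_0^t g$ into $F(t/T)$ with $F(\theta) = \psi(\theta)\int_0^\theta\widetilde g$ and $\widetilde g(s) = Tg(Ts)$; the elementary inequalities $\lb\tau/T\rb\le T^{-1}\lb\tau\rb$ and $\lb T\tau\rb\ge T\lb\tau\rb$, valid for $0<T\le1$, give $\|F(\cdot/T)\|_{H^b}\ls T^{1/2-b}\|F\|_{H^b}$ (using $b\ge0$) and $\|\widetilde g\|_{H^{b'}}\ls T^{1/2+b'}\|g\|_{H^{b'}}$ (using $b'\le0$), whose product is exactly $T^{1-b+b'}$. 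It therefore remains to prove the base case $T=1$: $\|\psi\int_0^\cdot g\|_{H^b}\ls\|g\|_{H^{b'}}$.

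Here I would write $\int_0^t g = \int\widehat g(\tau)\frac{e^{it\tau}-1}{i\tau}\,d\tau$ and split the $\tau$-integral at $|\tau|=1$. On $|\tau|\le1$ I expand $e^{it\tau}-1$ in its Taylor series, bounding each coefficient by Cauchy-Schwarz and $\|g\|_{H^{b'}}$ (finite thanks to $b'>-\frac12$) and summing $\sum_k\frac1{k!}\|t^k\psi\|_{H^b}<\infty$. On $|\tau|>1$ the constant term $-\psi\int_{|\tau|>1}\widehat g/(i\tau)$ is again controlled using $b'>-\frac12$, while the oscillatory term is $\psi\cdot G$ with $\widehat G(\tau)=\widehat g(\tau)\chi_{|\tau|>1}/(i\tau)$, so $\|G\|_{H^b}\ls\|g\|_{H^{b-1}}\le\|g\|_{H^{b'}}$ by $b\le b'+1$, and multiplication by the Schwartz function $\psi$ is bounded on $H^b$. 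I expect this base case, together with the exponent bookkeeping that pins down the power $T^{1-b+b'}$, to be the only genuinely delicate part; the conjugation and the vector-valued passage from the scalar estimate are routine.
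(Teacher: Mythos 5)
Your proof is correct. The paper itself does not prove this lemma but cites \cite[Lemma 2.1]{GTV97}, and your argument is essentially that standard proof: the homogeneous bound by direct computation of the space-time Fourier transform, and the inhomogeneous bound by conjugating with $U(-t)$ to reduce to a one-dimensional estimate in $t$, then splitting $\int_0^t g$ via $(e^{it\tau}-1)/(i\tau)$ into low frequencies (Taylor expansion) and high frequencies (constant and oscillatory parts), with $b'>-\frac12$ and $b\le b'+1$ entering exactly where you use them; your only cosmetic deviation is rescaling to $T=1$ first and extracting $T^{1-b+b'}$ by dilation, where \cite{GTV97} instead tracks the $T$-dependence directly through a frequency cutoff at $|\tau|\sim 1/T$.
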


Next, let us recall two estimates of Strichartz type. \cite[Theorem 3.1
ii)]{KPV91} implies the estimate
\begin{equation}\label{eq:str1-free}
\|I_x^{\frac{1}{2p}} I_y^{\frac{1}{2p}}U\phi\|_{L^p_tL^q_{x,y}}\ls
\|\phi\|_{L^2_{x,y}}\, \qquad \text{ if } \frac{2}{p}+\frac{2}{q}=1, \, p>2.
\end{equation}
Sobolev embeddings imply
\begin{equation}\label{eq:str2-free}
\|U\phi\|_{L^p_tL^q_{x,y}}\ls
\|\phi\|_{L^2_{x,y}}\, \qquad \text{ if }
\frac{3}{p}+\frac{2}{q}=1, \, p> 3,
\end{equation}
which is a special case of estimate (A.11) from \cite{GS93}.

In the case $b>\frac12$ we can write any $u\in X^{0,b}$ as a superposition
of modulated free solutions, and \cite[Lemma 2.3]{GTV97} implies
\begin{lem}\label{lem:str}
Let $b>\frac12$.
\begin{equation}\label{eq:str1}
\|I_x^{\frac{1}{2p}} I_y^{\frac{1}{2p}}u\|_{L^p_tL^q_{x,y}}\ls
\|u\|_{0,b}\, \qquad \text{ if } \frac{2}{p}+\frac{2}{q}=1, \, p>2,
\end{equation}
and
\begin{equation}\label{eq:str2}
\|u\|_{L^p_tL^q_{x,y}}\ls
\|u\|_{0,b}\, \qquad \text{ if }
\frac{3}{p}+\frac{2}{q}=1, \, p> 3.
\end{equation}
\end{lem}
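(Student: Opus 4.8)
The plan is to obtain both inequalities from the free-evolution estimates \eqref{eq:str1-free} and \eqref{eq:str2-free} by the standard transference principle, which is precisely the content of \cite[Lemma 2.3]{GTV97}. The mechanism is that for $b>\frac12$ every $u\in X^{0,b}$ can be written as a continuous superposition of modulated free solutions, and the weight $\lb\tau-\xi^3-\eta^3\rb^b$ controls the $L^1$-cost in the modulation variable of this superposition.

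Concretely, I would introduce the modulation variable $\sigma:=\tau-\xi^3-\eta^3$ and, for each $\sigma\in\R$, define a profile $g_\sigma$ by $\widehat{g_\sigma}(\xi,\eta):=\widehat u(\sigma+\xi^3+\eta^3,\xi,\eta)$, so that $\widehat{U(t)g_\sigma}(\xi,\eta)=e^{it(\xi^3+\eta^3)}\widehat{g_\sigma}(\xi,\eta)$. Performing the change of variables $\tau=\sigma+\xi^3+\eta^3$ in the inverse space-time Fourier transform of $u$ then yields the representation
\begin{equation*}
u(t)=c\int_\R e^{it\sigma}\,U(t)g_\sigma\,d\sigma,
\end{equation*}
which exhibits $u$ as an average over $\sigma$ of free solutions $U(t)g_\sigma$ twisted by the harmless unimodular phase $e^{it\sigma}$.

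Next I would let $T$ denote the Fourier multiplier $I_x^{\frac1{2p}}I_y^{\frac1{2p}}$ in the case of \eqref{eq:str1} and the identity in the case of \eqref{eq:str2}. Since $T$ commutes with $U(t)$, applying $T$ to the representation and using Minkowski's inequality to move the $\sigma$-integration outside the $L^p_tL^q_{x,y}$-norm gives, because $|e^{it\sigma}|=1$,
\begin{equation*}
\|Tu\|_{L^p_tL^q_{x,y}}\ls\int_\R\|TU g_\sigma\|_{L^p_tL^q_{x,y}}\,d\sigma.
\end{equation*}
The free estimate \eqref{eq:str1-free}, respectively \eqref{eq:str2-free}, now bounds each integrand by $\|g_\sigma\|_{L^2_{x,y}}$ on the admissible ranges of $(p,q)$.

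Finally I would close the argument by Cauchy-Schwarz in $\sigma$, splitting $1=\lb\sigma\rb^{-b}\lb\sigma\rb^{b}$ to obtain
\begin{equation*}
\int_\R\|g_\sigma\|_{L^2_{x,y}}\,d\sigma\le\Big(\int_\R\lb\sigma\rb^{-2b}\,d\sigma\Big)^{\frac12}\Big(\int_\R\lb\sigma\rb^{2b}\|g_\sigma\|_{L^2_{x,y}}^2\,d\sigma\Big)^{\frac12},
\end{equation*}
and the change of variables $\tau=\sigma+\xi^3+\eta^3$ identifies the second factor with $\|u\|_{0,b}$. The hypothesis enters at exactly one point — and this is the only subtlety worth flagging — namely that the first factor is finite precisely because $2b>1$, i.e.\ $b>\frac12$; this is the threshold at which the superposition of free solutions becomes summable, and the argument degenerates at the endpoint $b=\frac12$. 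Beyond organizing these steps I expect no genuine obstacle, as everything reduces to a routine transfer of the linear Strichartz bounds.
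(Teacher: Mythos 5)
Your proposal is correct and is exactly the paper's approach: the paper proves the lemma by invoking the transfer principle of \cite[Lemma 2.3]{GTV97} applied to the free estimates \eqref{eq:str1-free} and \eqref{eq:str2-free}, and your argument simply writes out the proof of that transfer principle (superposition of modulated free solutions, Minkowski, then Cauchy--Schwarz in the modulation variable using $2b>1$). No gaps; the role of $b>\frac12$ is identified correctly.
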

In particular, we obtain
\begin{equation}\label{eq:l4-str}
\|u\|_{L^4_{t,x,y}}\ls
\|u\|_{0,b} \qquad \text{ if } b>\frac{5}{12}
\end{equation}
by interpolation \eqref{eq:str2} for $p=5$ with the trivial bound $\|u\|_{L^2_{t,x,y}}=\|u\|_{0,0}$.
A further interpolation with the conservation of the $L^2$ - norm gives
\begin{equation}\label{eq:lpq-str}
\|u\|_{L^p_tL^q_{x,y}}\ls
\|u\|_{0,b} \quad \text{ if }\frac{2}{p}+\frac{2}{q}=1, \, p\ge 4 \quad \text{and} \quad b>\frac{2}{3p}+\frac{1}{q}.
\end{equation}

\subsection{Bilinear estimates for free solutions}\label{subsect:bil}
For a given measurable function $a:\R^4\to
\C$ of at most polynomial growth we define the bilinear operator $A$ with symbol $a$ via
\begin{equation*}
\widehat{A(f_1,f_2) }(\xi,\eta)=\int_{\xi=\xi_1+\xi_2\atop
  \eta=\eta_1+\eta_2} a(\xi_1,\xi_2,\eta_1,\eta_2)
\prod_{j=1}^2\widehat{f_j}(\xi_j,\eta_j)d\xi_1d\eta_1,
\end{equation*}
initially for $f_1,f_2 \in \mathcal{S}(\R^2)$.
Similar to \cite{G05} let $I_{x,-}^s$ be the bilinear
operator with symbol $|\xi_1-\xi_2|^s$,  $I_{x,+}^s$ be the bilinear
operator with symbol $|\xi_1+2\xi_2|^s$, $I_{y,-}^s$ be the bilinear
operator with symbol $|\eta_1-\eta_2|^s$ and $I_{y,+}^s$ be the bilinear
operator with symbol $|\eta_1+2\eta_2|^s$.
Convolution integrals as in the above definition will henceforth be abbreviated by
$\int_*$, e. g.

$$\int_* f(\xi_1)g(\xi_2)d\xi_1 := \int_{\xi_1+\xi_2 = \xi}f(\xi_1)g(\xi_2)d\xi_1 = \int f(\xi_1)g(\xi -\xi_1)d\xi_1$$

and similarly, if several variables appear.

\begin{pro}\label{pro:bil}
Let $b>\frac12$. Then,
\begin{align}
\label{eq:bil1}
\|I_x^{\frac12}I_{x,-}^{\frac12} (P_{y,k}u,v)\|_{L^2_{t,x,y}}\ls &
2^{\frac{k}{2}}\|u\|_{0,b}\|v\|_{0,b}\\
\label{eq:bil2}\|I_x^{\frac12}I_{x,-}^{\frac12}  (u,P_{y,k}v)\|_{L^2_{t,x,y}}\ls &
2^{\frac{k}{2}}\|u\|_{0,b}\|v\|_{0,b}\\
\label{eq:bil3}\|P_{y,k}I_x^{\frac12}I_{x,-}^{\frac12}  (u,v)\|_{L^2_{t,x,y}}\ls &
2^{\frac{k}{2}}\|u\|_{0,b}\|v\|_{0,b}
\end{align}
\end{pro}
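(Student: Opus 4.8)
The plan is to reduce all three inequalities to a single one-dimensional bilinear estimate for the Airy flow and then to reassemble the $y$-variable by slicing. Since $b>\frac12$, it is enough to prove the three bounds for free solutions, i.e.\ with $u,v$ replaced by $U\phi_1,U\phi_2$ and $\|\cdot\|_{0,b}$ replaced by $\|\phi_j\|_{L^2}$. Indeed, writing $u=\int_\R e^{it\lambda}U(t)G_\lambda\,d\lambda$ with $\widehat{G_\lambda}(\xi,\eta)=\widehat u(\lambda+\xi^3+\eta^3,\xi,\eta)$, and similarly for $v$, one distributes the bilinear operator over the two modulation variables, pulls the $L^2_{t,x,y}$-norm inside by Minkowski, and uses $\int_\R\|G_\lambda\|_{L^2}\,d\lambda\ls\|u\|_{0,b}$, which follows from Cauchy-Schwarz in $\lambda$ because $\int_\R\lb\lambda\rb^{-2b}\,d\lambda<\infty$. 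This is the transfer principle already behind Lemma \ref{lem:str}; cf.\ \cite[Lemma 2.3]{GTV97}.

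\emph{The one-dimensional core.} For free solutions the space-time Fourier transform of $w:=I_x^{\frac12}I_{x,-}^{\frac12}(U\phi_1,U\phi_2)$ is
\[
\widehat w(\tau,\xi,\eta)=\int_* |\xi|^{\frac12}|\xi_1-\xi_2|^{\frac12}\,\widehat{\phi_1}(\xi_1,\eta_1)\widehat{\phi_2}(\xi_2,\eta_2)\,\delta\big(\tau-\xi_1^3-\xi_2^3-\eta_1^3-\eta_2^3\big).
\]
The phase splits as $(\xi_1^3+\xi_2^3)+(\eta_1^3+\eta_2^3)$, so at fixed $\eta,\eta_1$ the delta constrains only the $x$-variables at the shifted modulation $\rho=\tau-\eta_1^3-\eta_2^3$. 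I isolate the one-dimensional kernel
\[
B[p,q](\rho,\xi):=\int_\R |\xi|^{\frac12}|2\xi_1-\xi|^{\frac12}\,p(\xi_1)q(\xi-\xi_1)\,\delta\big(\rho-\xi_1^3-(\xi-\xi_1)^3\big)\,d\xi_1
\]
and prove $\|B[p,q]\|_{L^2_{\rho,\xi}}\ls\|p\|_{L^2}\|q\|_{L^2}$. Applying Cauchy-Schwarz in $\xi_1$ against the measure $\delta(\rho-\xi_1^3-(\xi-\xi_1)^3)\,d\xi_1$, the decisive algebraic point is that $|\partial_{\xi_1}(\xi_1^3+(\xi-\xi_1)^3)|=3|\xi||2\xi_1-\xi|$ equals three times the square of the symbol $|\xi|^{\frac12}|2\xi_1-\xi|^{\frac12}$; after resolving the delta the symbol cancels the Jacobian exactly, and the remaining measure factor is $\frac13\#\{\text{roots}\}$. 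As $\xi_1^3+(\xi-\xi_1)^3=3\xi\xi_1^2-3\xi^2\xi_1+\xi^3$ is quadratic in $\xi_1$, there are at most two roots, so this factor is $\le\frac23$; integrating $|p|^2|q|^2$ over $(\rho,\xi)$ then gives $\frac23\|p\|^2\|q\|^2$.

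\emph{Slicing in $\eta$.} Setting $F_j(\eta):=\|\widehat{\phi_j}(\cdot,\eta)\|_{L^2_\xi}$, so that $\|F_j\|_{L^2_\eta}=\|\phi_j\|_{L^2}$, the formula above rewrites as $\widehat w(\tau,\xi,\eta)=\int_\R B[\widehat{\phi_1}(\cdot,\eta_1),\widehat{\phi_2}(\cdot,\eta-\eta_1)]\big(\tau-\eta_1^3-(\eta-\eta_1)^3,\xi\big)\,d\eta_1$. Taking the $L^2_{\tau,\xi}$-norm at fixed $\eta$, Minkowski's inequality and the one-dimensional bound (the $\eta_1$-dependent shift of $\tau$ being irrelevant in $L^2_\tau$) yield $\|\widehat w(\cdot,\cdot,\eta)\|_{L^2_{\tau,\xi}}\ls\int_\R F_1(\eta_1)F_2(\eta-\eta_1)\,d\eta_1$. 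The three projections merely restrict this convolution: \eqref{eq:bil1} inserts $\chi_{\{|\eta_1|\le 2^k\}}$, \eqref{eq:bil2} inserts $\chi_{\{|\eta-\eta_1|\le 2^k\}}$, and \eqref{eq:bil3} inserts $\chi_{\{|\eta|\le 2^k\}}$ outside the $\eta_1$-integral. For \eqref{eq:bil1} and \eqref{eq:bil2}, Young's inequality bounds the $L^2_\eta$-norm of the truncated convolution by $\|\chi_{\{|\cdot|\le 2^k\}}F_1\|_{L^1}\|F_2\|_{L^2}\ls 2^{\frac k2}\|\phi_1\|_{L^2}\|\phi_2\|_{L^2}$, the factor $2^{k/2}$ coming from Cauchy-Schwarz on the truncated $L^1$-norm. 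For \eqref{eq:bil3} I instead bound the full convolution pointwise by $\|F_1\|_{L^2}\|F_2\|_{L^2}$ and integrate this constant over the slab $\{|\eta|\le 2^k\}$, whose length $\sim 2^k$ furnishes the factor $2^{k/2}$.

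\emph{Main obstacle.} The substantively different case is \eqref{eq:bil3}, where the projection sits on the output. This tempts one to bound the resonance measure uniformly in the output frequency, but that fails: at $\eta=0$ the $y$-dispersion $\eta_1^3+\eta_2^3$ vanishes identically and the admissible set of $\eta_1$ becomes unbounded. The slicing circumvents precisely this by retaining the $\eta$-integrability of the data (through $F_1,F_2\in L^2$) instead of absorbing it into a measure, and by extracting the gain $2^{k/2}$ from the output slab rather than from an input restriction. I expect the heart of the matter to be the exact cancellation in the one-dimensional step — that $I_x^{\frac12}I_{x,-}^{\frac12}$ is tuned to be the square root of the Airy Jacobian and that the cubic degenerates to a quadratic — on which all three estimates ultimately rest.
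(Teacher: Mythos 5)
Your proof is correct and is essentially the paper's own argument: both reduce to free solutions via the transfer principle, exploit the exact cancellation between the symbol $|\xi|^{\frac12}|\xi_1-\xi_2|^{\frac12}$ and the Jacobian $3|\xi||\xi_1-\xi_2|$ of the phase in $\xi_1$ when resolving the delta function, and extract the factor $2^{\frac{k}{2}}$ from the length of the relevant $\eta$-set (Cauchy-Schwarz/Young on the input support for \eqref{eq:bil1}--\eqref{eq:bil2}, and an $L^\infty_\eta$ bound integrated over the output slab for \eqref{eq:bil3}, exactly as in the paper's $\|P_{y,k}F\|_{L^2_\eta}\ls 2^{\frac{k}{2}}\|F\|_{L^\infty_\eta}$ step). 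Your only deviation is organizational: you isolate the $x$-part as a standalone one-dimensional bilinear Airy estimate and reassemble the three bounds through the $\eta$-convolution $F_1*F_2$, whereas the paper runs the same computation directly in two dimensions.
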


\begin{proof}
By the  transfer principle (i.\ e.\  the multilinear
generalization of \cite[Lemma 2.3]{GTV97})
it suffices to prove the estimates for free solutions
 $u(t)=U(t)u_0$ and $v(t)=U(t)v_0$ with $\|u\|_{0,b}\|v\|_{0,b}$ replaced by
$\|u_0\|_{L^2_{x,y}} \|v_0\|_{L^2_{x,y}}$. In this case,
the Fourier transform of $I_x^{\frac12}I_{x,-}^{\frac12}  (u,v)$ in all three variables
is given as
\begin{align*}
 & \qquad \mathcal{F} I_x^{\frac12}I_{x,-}^{\frac12}  (u,v) (\xi,\eta,\tau) \\
= & \qquad c \int_* |\xi(\xi_1-\xi_2)|^{\frac12}\delta(\tau-\xi_1^3-\xi_2^3-\eta_1^3-\eta_2^3)
          \widehat{u_0}(\xi_1,\eta_1)\widehat{v_0}(\xi_2,\eta_2)d\xi_1 d \eta_1 \\
= & \qquad c \int_* |\xi(\xi_1^*-\xi_2^*)|^{-\frac12}(\widehat{u_0}(\xi_1^*,\eta_1)\widehat{v_0}(\xi_2^*,\eta_2)+
\widehat{u_0}(\xi_2^*,\eta_1)\widehat{v_0}(\xi_1^*,\eta_2)) d \eta_1.
\end{align*}
Here $\xi_1^*$ and $\xi_2^*$ are the solutions of $g(\xi_1^*)=0$, where $g(\xi_1)=\tau-\xi_1^3-(\xi-\xi_1)^3-\eta_1^3-\eta_2^3 $.
Observe that $\xi_1^*+\xi_2^*=\xi$, so by symmetry it suffices to consider only the first contribution to the above expression.
To see \eqref{eq:bil1} we assume $u=P_{y,k}u$ and use Cauchy-Schwarz to obtain the upper bound
$$2^{\frac{k}{2}}\left(\int_* |\xi(\xi_1^*-\xi_2^*)|^{-1}|\widehat{u_0}(\xi_1^*,\eta_1)\widehat{v_0}(\xi_2^*,\eta_2)|^2 d \eta_1\right)^{\frac12} .$$
Squaring and integrating with respect to $\tau$ leads to
\begin{align*}
 & \qquad \|\mathcal{F} I_x^{\frac12}I_{x,-}^{\frac12}  (u,v) (\xi,\eta,\cdot)\|^2_{L^2_{\tau}} \\
\ls & \qquad 2^k \int_* |\xi(\xi_1^*-\xi_2^*)|^{-1}|\widehat{u_0}(\xi_1^*,\eta_1)\widehat{v_0}(\xi_2^*,\eta_2)|^2 d \eta_1 d \tau \\
\ls & \qquad 2^k \int_* |\widehat{u_0}(\xi_1,\eta_1)\widehat{v_0}(\xi_2,\eta_2)|^2 d \eta_1 d \xi_1.
\end{align*}
 Now integration with respect to $\xi$ and $\eta$ gives (the square of) \eqref{eq:bil1}, the second estimate \eqref{eq:bil2} then obviously  holds true by symmetry.

\quad

Alternatively we can first take the $L^2_{\tau}$ - norm and apply Minkowski's integral inequality to obtain

\begin{align*}
 & \qquad \|\mathcal{F} I_x^{\frac12}I_{x,-}^{\frac12}  (u,v) (\xi,\eta,\cdot)\|_{L^2_{\tau}} \\
\ls & \qquad \int_* \||\xi(\xi_1^*-\xi_2^*)|^{-\frac12}(\widehat{u_0}(\xi_1^*,\eta_1)\widehat{v_0}(\xi_2^*,\eta_2)\|_{L^2_{\tau}}
       d \eta_1 =: I(\xi,\eta).
\end{align*}

Now the square of the norm inside the integral equals

$$\int_* |\widehat{u_0}(\xi_1,\eta_1)\widehat{v_0}(\xi_2,\eta_2)|^2 d \xi_1,$$ 

so that by a second application of Minkowski's inequality

$$\|I(\cdot,\eta)\|_{L^2_{\xi}} \ls \int_* \|\widehat{u_0}(\cdot,\eta_1)\|_{L^2_{\xi}}\|\widehat{v_0}(\cdot,\eta_2)\|_{L^2_{\xi}}
          d \eta_1 \ls \|u_0\|_{L^2_{xy}}\|v_0\|_{L^2_{xy}},$$

which gives a bound independent of $\eta$. Finally we use $\|P_{y,k}F\|_{L^2_{\eta}} \ls 2^{\frac{k}{2}} \|F\|_{L^{\infty}_{\eta}}$
to obtain \eqref{eq:bil3}.
\end{proof}

\qquad

\begin{rem}\label{rem:bil}
The above proposition has several useful consequences:
\begin{enumerate}
\item As the proof shows, we may replace the dyadic intervals
symmetric around zero by intervals $I$ of arbitrary position and length
$|I|$, if we change the factor $2^{\frac{k}{2}}$ on the right into
$|I|^{\frac12}$. In case of \eqref{eq:bil1} the position of $|I|$ may even
depend on $\eta $.
\item Summing up the dyadic pieces in \eqref{eq:bil1} - \eqref{eq:bil3} and using multilinear interpolation we obtain for
$s_0, s_1, s_2 \geq 0$ with $s_0 + s_1+s_2>\frac12$ and $b>\frac12$ the inequality
\begin{equation}\label{eq:bil4}
\|J_y^{-s_0}I_x^{\frac12}I_{x,-}^{\frac12} (u,v)\|_{L^2_{t,x,y}}\ls 
\|J_y^{s_1}u\|_{0,b}\|J_y^{s_2}v\|_{0,b}.
\end{equation}
\item By symmetry in $x$ and $y$ we see that all the inequalities
\eqref{eq:bil1} - \eqref{eq:bil4} are equally valid with $x$ and $y$
interchanged.
\end{enumerate}
\end{rem}

\section{The key estimate}\label{sect:key}
Now we are prepared to prove the key estimate for the proof of Theorem \ref{thm:main}.
\begin{thm}\label{thm:key-est}
Let $s>\frac12$. Then for any $b'\le -\frac13$ and for any
$b>\frac12$ the estimate
\begin{equation}\label{eq:key-est}
\|(\partial_x+\partial_y)(u_1u_2)\|_{s,b'}\ls \|u_1\|_{s,b}\|u_2\|_{s,b}
\end{equation}
holds true for all $u_1,u_2\in X^{s,b}$.
\end{thm}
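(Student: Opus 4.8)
The plan is to dualize and reduce \eqref{eq:key-est} to a trilinear convolution estimate, and then to absorb the derivative loss by combining the resonance relation with the bilinear estimates of Proposition~\ref{pro:bil}. First I would note that $\lb\sigma\rb^{b'}\le\lb\sigma\rb^{-\frac13}$ for $b'\le-\frac13$, so it suffices to treat $b'=-\frac13$, and that larger $b$ only helps, so I would take $b$ close to $\frac12$. By the definition of the norm and duality with respect to the $L^2$-pairing, $\|(\partial_x+\partial_y)(u_1u_2)\|_{s,-\frac13}$ equals the supremum over $\|w\|_{-s,\frac13}\le1$ of $|\lb(\partial_x+\partial_y)(u_1u_2),w\rb|$, which on the Fourier side becomes a trilinear form $\int_* M\,f_1f_2h$ over $\xi=\xi_1+\xi_2$, $\eta=\eta_1+\eta_2$, $\tau=\tau_1+\tau_2$, where $f_j=\lb\sigma_j\rb^{b}\lb\xi_j\rb^s|\widehat{u_j}|$ and $h=\lb\sigma\rb^{\frac13}\lb\xi\rb^{-s}|\widehat w|$ are arbitrary nonnegative $L^2$-functions, $\sigma_j=\tau_j-\xi_j^3-\eta_j^3$, $\sigma=\tau-\xi^3-\eta^3$, and
\[
M=\frac{|\xi+\eta|\,\lb\xi\rb^s}{\lb\xi_1\rb^s\lb\xi_2\rb^s\,\lb\sigma\rb^{\frac13}\lb\sigma_1\rb^{b}\lb\sigma_2\rb^{b}}.
\]
The goal is then the uniform bound $\int_* M\,f_1f_2h\ls\|f_1\|_2\|f_2\|_2\|h\|_2$.

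The algebraic heart of the matter is the resonance identity
\[
\sigma_1+\sigma_2-\sigma=3(\xi\xi_1\xi_2+\eta\eta_1\eta_2),
\]
which yields $\max(\lb\sigma\rb,\lb\sigma_1\rb,\lb\sigma_2\rb)\gs\lb\xi\xi_1\xi_2+\eta\eta_1\eta_2\rb$. I would first simplify the Sobolev weights: since $\xi=\xi_1+\xi_2$ we have $\lb\xi\rb^s\ls\lb\xi_1\rb^s+\lb\xi_2\rb^s$, so ordering $|\xi_1|\le|\xi_2|$ gives $\lb\xi\rb^s/(\lb\xi_1\rb^s\lb\xi_2\rb^s)\ls\lb\xi_1\rb^{-s}$; with $s>\frac12$ this leaves a genuine smoothing factor on the lower $x$-frequency. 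Splitting $|\xi+\eta|\le|\xi|+|\eta|$, I would treat the two derivative losses separately. The $|\xi|$-loss is aligned with the available $x$-smoothing and can be handled essentially as in the KdV theory: using the decomposition $|\xi|=|\xi(\xi_1-\xi_2)|^{\frac12}\,(|\xi|/|\xi_1-\xi_2|)^{\frac12}$, I would feed the factor $|\xi(\xi_1-\xi_2)|^{\frac12}$ into the bilinear estimates \eqref{eq:bil1}--\eqref{eq:bil4}; the decisive point is that these estimates return the $x$-gain only at the price of a half-derivative budget in $y$, which must be paid by modulation through the resonance relation.

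The main obstacle is the transverse $|\eta|$-loss, for which no $y$-smoothing is present in the norm. Here I would run the dyadic case analysis on the modulation variables. When one of $\lb\sigma\rb,\lb\sigma_1\rb,\lb\sigma_2\rb$ is large — quantitatively $\gs$ a suitable power of the frequencies — the surplus modulation (including the spare $\lb\sigma\rb^{-\frac13}$ that the bilinear estimates do not consume) is used via the resonance identity to pay the $y$- (resp. $x$-) regularity budget required by \eqref{eq:bil4} and its $x\leftrightarrow y$ counterpart from Remark~\ref{rem:bil}(iii). The hard case is the near-resonant region $\xi\xi_1\xi_2+\eta\eta_1\eta_2\approx0$, where no modulation gain is available; there the resonance constraint slaves the transverse frequencies to the longitudinal ones (roughly $|\eta\eta_1\eta_2|\approx|\xi\xi_1\xi_2|$), so that the $|\eta|$-loss together with the $y$-derivative budget can be re-expressed through the $x$-frequencies and then absorbed by the spare $x$-smoothing $\lb\xi_1\rb^{-s}$. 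Tracking the exponents in this balance is precisely where the condition $s>\frac12$ — matching the total budget $>\frac12$ in \eqref{eq:bil4} — becomes sharp, and I expect the bookkeeping in this regime to be the most delicate part of the argument.
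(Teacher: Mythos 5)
Your setup is fine as far as it goes: the dualization, the reduction to $b'=-\frac13$, the resonance identity, and the idea of feeding $|\xi(\xi_1-\xi_2)|^{\frac12}$ into the bilinear estimates all match the paper's reduction to a weighted $L^2$ convolution estimate. But there are two genuine gaps, both traceable to one misreading: you treat the spatial weight in $X^{s,b}$ as carrying only $x$-regularity (brackets $\lb\xi_j\rb^s$), and conclude that ``no $y$-smoothing is present in the norm''. The weight actually used in the paper's proof --- and needed for the theorem to be about $H^s(\R^2)$ data at all --- is the full two-dimensional $\lb(\xi_j,\eta_j)\rb^s$; the $\lb\xi\rb^s$ in Definition \ref{dfn:spaces} is a slip. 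This is not cosmetic: the whole point of the symmetrizing transformation of Subsection \ref{subsect:trafo} is that both the equation \emph{and the norm} become symmetric under $x\leftrightarrow y$, so one may assume $|\eta|\le|\xi|$ from the start. Then the loss $|\xi+\eta|\ls|\xi|$ is purely longitudinal, and the 2D Sobolev weight remains free to pay the transverse derivative budget in \eqref{eq:bil4}. Because you forgo this reduction, you must invent mechanisms for the $|\eta|$-loss, and both mechanisms you propose fail.

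Concretely: (1) your claim that the $y$-budget of \eqref{eq:bil4} ``must be paid by modulation through the resonance relation'' breaks down on the zero set of the resonance function, which meets the region $|\xi|\ls|\xi_1-\xi_2|$ in a large set. Take $\xi\sim N$, $\xi_1\sim 2N$, $\xi_2\sim -N$, $\eta_1\sim M$, $\eta_2\sim -M$, $\eta\sim -N^3/M^2$ with $M\gg N^{\frac32}$: then $\xi\xi_1\xi_2+\eta\eta_1\eta_2\approx 0$, all three modulations can be $O(1)$, and your surviving factor $\lb\xi_1\rb^{-s}\sim N^{-s}$ cannot produce the weight $\lb\eta_2\rb^{-s}\sim M^{-s}$ that \eqref{eq:bil4} demands. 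The paper pays this budget not with modulation but with the 2D weight $\lb(\xi_2,\eta_2)\rb^{-s}\le\lb\eta_2\rb^{-s}$ of the lower-frequency factor --- this is exactly where $s>\frac12$ enters, in its region $R_1$. (2) Your ``slaving'' argument in the near-resonant region is invalid: the resonance constrains only the \emph{product} $\eta\eta_1\eta_2\approx-\xi\xi_1\xi_2$, not the individual transverse frequencies. With $\xi,\xi_1,\xi_2=O(1)$, $\eta\sim\eta_1\sim N$, $\eta_2\sim N^{-2}$ you are near-resonant, the loss is $|\xi+\eta|\sim N$, yet every $x$-frequency is $O(1)$, so nothing of size $N$ can be ``re-expressed through the $x$-frequencies'' and absorbed by $\lb\xi_1\rb^{-s}$. (In the paper this configuration sits in the symmetric case $|\eta|\ge|\xi|$ and is handled by the $y$-analogue of the bilinear estimates, Remark \ref{rem:bil} (iii) together with the interval refinement (i) --- not by slaving.) Finally, your scheme has no tool for the regime $|\xi_1-\xi_2|\ll|\xi|$ with one large transverse frequency (the paper's regions $R_2$, $R_3$), which is treated with the mixed-norm Strichartz estimates \eqref{eq:str1} and \eqref{eq:lpq-str}, again using the 2D weights. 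So the proposal as written does not close; repairing it essentially forces you back to the paper's symmetry reduction and its four-region decomposition.
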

\begin{proof}
Throughout this proof let $\ast$ denote the convolution constraint
\[
(\tau,\xi,\eta)=(\tau_1,\xi_1,\eta_1)+(\tau_2,\xi_2,\eta_2)
\]
Under the above constraint it is obvious that
\begin{equation*}
\lb (\xi,\eta)\rb^s\ls \lb (\xi_1,\eta_1)\rb^s+\lb (\xi_2,\eta_2)\rb^s
\ls \lb (\xi_1,\eta_1)\rb^s\lb (\xi_2,\eta_2)\rb^s
\qquad (s>0)
\end{equation*}
holds true, which implies that it suffices to prove the claim in the
case $\frac12 < s \le \frac34$.
Let $\sigma_0:=
\tau-\xi^3-\eta^3$, $\sigma_j:=
\tau_j-\xi_j^3-\eta_j^3$, and
$f_j(\tau_j,\xi_j,\eta_j):=|\widehat{u_j}(\tau_j,\xi_j,\eta_j)|\lb\sigma_j\rb^b$
for $j=1,2$. The claim is equivalent to the following weighted
$L^2$ convolution estimate:
\begin{equation}\label{eq:key-est-l2}
\big\|M(f_1,f_2)\big\|_{L^2}\ls \prod_{j=1}^2\|f_j\|_{L^2}
\end{equation}
where
\[
M(f_1,f_2)(\tau,\xi,\eta):=\frac{(\xi+\eta)\lb (\xi,\eta)\rb^s}{\lb \sigma_0\rb^{-b'}} \int_{\ast} \prod_{j=1}^2\frac{f_j(\tau_j,\xi_j,\eta_j)}{\lb\sigma_j\rb^{b}\lb (\xi_j,\eta_j)\rb^s} d\tau_1d\xi_1d\eta_1.
\]
To show \eqref{eq:key-est-l2} we may assume by symmetry that $|\eta| \le |\xi|$. Then we split the domain of integration into three regions, which induces the following decomposition:
\[
\big\|M(f_1,f_2)\big\|_{L^2}=R_1+R_2+R_3+R_4.
\]

{\it Contribution $R_1$:} This corresponds to the region $|\xi| \ls |\xi_1-\xi_2|$, so that $|\xi| \ls |\xi|^{\frac12}|\xi_1-\xi_2|^{\frac12}$.
Assuming in addition that $|(\xi_1, \eta_1)| \ge |(\xi_2, \eta_2)|$, which can be done without loss of generality, we obtain the bound
$$R_1\ls \|I_x^{\frac12}I_{x,-}^{\frac12}(J^su_1,u_2)\|_{L^2_{t,x,y}}\ls \|u_1\|_{s,b}\|u_2\|_{s,b},$$
where we have used \eqref{eq:bil4} with $s_0=s_1=0$ and $s_2 = s > \frac12$.
 
{\it Contribution $R_2$:}  This corresponds to the region where $|\xi_1-\xi_2| \ll |\xi|$  and $|\eta_1| \gs |\xi|$. Here, we have $|\xi| \sim |\xi_1| \sim |\xi_2|$ and obtain
$$R_2\ls \|(I_x^{\frac{1-s}{2}}I_y^{\frac{1-s}{2}}J^su_1)(I_x^su_2)\|_{L^2_{t,x,y}}
+ \|(I_x^{\frac12}I_y^{\frac12}u_1)(J^su_2)\|_{L^2_{t,x,y}}=: R_{2.1} +  R_{2.2},$$
where
$$ R_{2.1} \le \|I_x^{\frac{1-s}{2}}I_y^{\frac{1-s}{2}}J^su_1\|_{L^p_tL^q_{x,y}}
\|J^su_2\|_{L^{\tilde{p}}_tL^{\tilde{q}}_{x,y}},$$
whenever $\frac{1}{p}+\frac{1}{q}=\frac{1}{p}+\frac{1}{\tilde{p}}=\frac{1}{q}+\frac{1}{\tilde{q}}=\frac12$ and $p>2$.
Choosing $\frac{1}{p}=1-s$ we can apply the Strichartz-type estimate \eqref{eq:str1} to bound the first factor by
$\|u_1\|_{s,b}$, while for the second we use the estimate \eqref{eq:lpq-str}, so that we arrive at
\[ R_{2.1} \ls \|u_1\|_{s,b}\|u_2\|_{s,b}.\] The contribution $R_{2.2}$ can be dealt with in exactly the same manner.

{\it Contribution $R_3$:} We consider the region where $|\xi_1-\xi_2| \ll |\xi|$  and $|\eta_2| \gs |\xi|$.
Here, the same argument as for $R_2$ applies (with $u_1$ and $u_2$ interchanged).

{\it Contribution $R_4$:} Here, we assume $|\xi_1-\xi_2| \ll |\xi|\sim |\xi_1| \sim |\xi_2|$ and $|\eta_1| \ll |\xi|$ and $|\eta_2| \ll |\xi|$, thus completing the case by case discussion. We observe that under the convolution constraint $*$ the
\emph{resonance} identity
\begin{equation}
\sigma_0-\sigma_1-\sigma_2=3(\xi\xi_1\xi_2+\eta\eta_1\eta_2) \label{eq:res}
\end{equation}
holds true (This is similar to the low regularity analysis of the KdV equation, where the analogous identity has been observed in \cite[formula 7.46]{B93}, see also \cite{KPV96}. Note that this similarity is due the transformation performed in Subsection \ref{subsect:trafo}.). In region $R_4$ this identity implies the inequality
\begin{equation}\label{eq:resineq}
 \lb \sigma_0 \rb + \lb \sigma_1 \rb + \lb \sigma_2 \rb \gs |\xi|^3,
\end{equation}
which naturally leads to the following further division $R_4=R_{4.0}+R_{4.1}+R_{4.2}$.

{\it Contribution $R_{4.0}$:} This corresponds to the subregion where $\lb \sigma_0 \rb \gs \lb \sigma_{1} \rb,\lb \sigma_{2} \rb$. Using \eqref{eq:resineq} we estimate
of \eqref{eq:key-est-l2} by
$$R_{4.0}\ls \|J^s(u_1u_2)\|_{L^2_{t,x,y}} \ls \|J^su_1\|_{L^4_{t,x,y}}\|J^su_2\|_{L^4_{t,x,y}} \ls \|u_1\|_{s,b}\|u_2\|_{s,b},$$
where in the last step we have used the estimate \eqref{eq:l4-str}.

{\it Contribution $R_{4.1}$:} Here, we consider the subregion where $\lb \sigma_1 \rb \gs \lb \sigma_{0} \rb,\lb \sigma_{2} \rb$. We recall the operator
 $\widehat{\Lambda^bu_1}(\tau_1,\xi_1,\eta_1)=\lb \sigma_1 \rb^b \widehat{u_1}(\tau_1,\xi_1,\eta_1)$. Using \eqref{eq:resineq} as well as $\lb \sigma_1 \rb \gs \lb \sigma_0 \rb$
we obtain the upper bound
\begin{align*}
R_{4.1}\ls& \|J^s((\Lambda^bu_1)(u_2))\|_{0,-b} \ls  \|(J^s\Lambda^bu_1)(J^su_2)\|_{L^{\frac43}_{t,x,y}} \\\ls& \|J^s\Lambda^bu_1\|_{L^2_{t,x,y}}\|J^su_2\|_{L^4_{t,x,y}},
\end{align*}
where first the dual version of \eqref{eq:l4-str} and then this estimate itself were applied.

{\it Contribution $R_{4.2}$:}  This corresponds to the subregion where $\lb \sigma_2 \rb \gs \lb \sigma_{0} \rb, \lb \sigma_{1} \rb$. This can be treated in precisely the same manner as $R_{4.1}$.
\end{proof}

For the sake of completeness, we conclude this paper with a sketch of the proof of  Theorem \ref{thm:main} based on Theorem \ref{thm:key-est}. The ideas are well-known, see e.g. \cite{B93,KPV96,GTV97}. For $\phi \in H^s(\R^2)$ we solve the integral equation associated to \eqref{eq:zk-tr}
\[
u(t)=U(t)\phi+ \mathcal{I}(u)(t), \quad \mathcal{I}(u)(t):=4^{-\frac13}\int_0^tU(t-s) (\partial_{x}+\partial_{y})u^2(s) ds
\]
in $X^{s,b}_T$ by means of the contraction mapping principle. Indeed, from Lemma \ref{lem:linear} and Theorem \ref{thm:key-est} it follows that
\begin{align*}
\|U(t)\phi +\mathcal{I}(u)\|_{s,b;T} &\ls \|\phi\|_{H^s}+T^\delta\|(\partial_{x}+\partial_{y})u^2\|_{s,b';T}\\
&\ls \|\phi\|_{H^s}+T^\delta \|u\|^2_{s,b;T}
\end{align*}
for some $b>\frac12$, $b'<-\frac13$ and $\delta>0$, and similarly
\[\|\mathcal{I}(u)-\mathcal{I}(v)\|_{s,b;T}
\ls T^\delta( \|u\|_{s,b;T}+\|v\|_{s,b;T})\|u-v\|_{s,b;T}.
\]
This implies existence of a fixed point $u \in X^{s,b}_T\subseteq C([-T,T],H^s(\R^2))$ for suitably chosen $T>0$ (depending on $\|\phi\|_{H^s}$). Based on these estimates one can also prove uniqueness of $u \in X^{s,b}_T$ and continuous dependence on the initial data. 
\bibliographystyle{plain} \bibliography{zak-kuz-2d.bib}
\end{document}